\numberwithin{equation}{section}
\newtheorem{lem}[equation]{Lemma}
\newtheorem{prop}[equation]{Proposition}
\newtheorem{thm}[equation]{Theorem}
\newtheorem{quest}[equation]{Question}
\newtheorem{Example}[equation]{Example}
\newtheorem{remark}[equation]{Remark}
\def\co{\colon\thinspace}
\newcommand{\Iso}{\mbox{Iso}}
\newcommand{\e}{\varepsilon}
\newcommand{\II}{\mathbb I}
\def\a{\alpha}
\def\B{\mathcal B}
\def\de{\delta}
\def\o{\omega}
\def\Si{\Sigma}
\def\M{\mathcal M}
\def\K{\mathcal K}
\def\b{\beta}
\def\d{\partial}
\def\k{\kappa}
\def\r{\rho}
\def\o{\omega}
\def\s{\sigma}
\def\Cka{\scalebox{.9}{$C^{k,\a}\!$}}
\def\Ckoa{\scalebox{.9}{$C^{k+1,\a}\!$}}
\def\Cz{\scalebox{.9}{$C^{0}\!$}}
\def\Cinfty{\scalebox{.9}{$C^{\infty}\!$}}
\def\S1{\bf S^1}
\newcommand{\R}{{\mathbb R}}
\def\equalsfill{$\m@th\mathord=\mkern-7mu
\cleaders\hbox{$\!\mathord=\!$}\hfill
\mkern-7mu\mathord=$}
\begin{document}

\abovedisplayskip=6pt plus3pt minus3pt
\belowdisplayskip=6pt plus3pt minus3pt

\title[Gromov-Hausdorff hyperspace of nonnegatively curved $2$-spheres]
{\bf The Gromov-Hausdorff hyperspace of nonnegatively curved $2$-spheres}

\thanks{\it 2010 Mathematics Subject classification.\rm\ 
Primary 53C21, Secondary 52A20, 53C45, 54B20, 57N20.
\it\ Keywords:\rm\ nonnegative curvature, convex body, hyperspace, 
space of metrics, Gromov-Hausdorff, infinite dimensional topology.}\rm

\author{Igor Belegradek}

\address{Igor Belegradek\\School of Mathematics\\ Georgia Institute of
Technology\\ Atlanta, GA 30332-0160}\email{ib@math.gatech.edu}


\date{}
\begin{abstract} 
We study topological properties of the Gromov-Hausdorff metric
on the set of isometry classes of nonnegatively curved $2$-spheres.
\end{abstract}
\maketitle

\section{Introduction}
\label{sec: intro}

The Gromov-Hausdorff (GH) distance is ubiquitous in
studying families of Riemannian metrics with lower curvature bounds. 
The simplest scenario is when all the metrics in the family
live on the same manifold. 
We call any set of isometry classes of metrics on 
closed $\Cinfty$ manifold $N$ 
equipped with the GH distance
a {\em GH hyperspace of $N$}.  

A metric is {\em intrinsic\,} if the distance between any two points is
the infimum of lengths of curves joining the points.
Any $\Cinfty$ Riemannian metric is intrinsic, and this property is preserved under 
GH limits.
For $\k\in\R$ let 
$\M_{\scriptscriptstyle{\mathrm{curv}\ge \k}}^{\mathrm{\scalebox{.54}{$\mathrm{GH}$}}}(N)$
be the GH hyperspace of intrinsic metrics of curvature $\ge \k$ on $N$. 
Let $\M_{\scriptscriptstyle{\mathrm{sec}\ge \k}}^{\mathrm{\scalebox{.54}{$\mathrm{GH}$}}}(N)$,
$\M_{\scriptscriptstyle{\mathrm{sec}> \k}}^{\mathrm{\scalebox{.54}{$\mathrm{GH}$}}}(N)$
be the GH hyperspaces of $\Cinfty$ Riemannian metrics on $N$
of sectional curvatures $\ge \k$, $>\k$, respectively. 
Topological properties of these GH hyperspaces
are largely a mystery which is why it is more common to give
$\M_{\scriptscriptstyle{\mathrm{sec}> \k}}^{\mathrm{\scalebox{.54}{$\mathrm{GH}$}}}(N)$
the $\Cinfty$ topology resulting in
a stratified space whose strata are Hilbert manifolds~\cite{Bou}.

Our starting point is that for $N=S^2$ and $\k=0$ 
the above GH hyperspaces can be identified 
with the $O(3)$-quotients  of certain
hyperspaces of $\R^3$, see Theorem~\ref{thm: intro GH vs hyperspaces} below. 
This is made possible by the convex surface theory. 

A {\em hyperspace of\,} $\R^3$ is a set of compacta of $\R^3$
equipped with the Hausdorff metric. 
A {\em convex body\,} is a convex set with non-empty interior.
The boundary of any convex body in $\R^3$ inherits an intrinsic metric
of nonnegative curvature, which we call
the {\em boundary metric.} 
A metric that is isometric to the
distance function of a $\Cinfty$ Riemannian metric is {\em intrinsically $\Cinfty$}.
The {\em Steiner point\,} is a way to assign a center to any convex compactum
in $\R^3$ that is continuous, $\Iso(\R^3)$-invariant, and Minkowski linear, and in fact,
these properties characterize the Steiner point~\cite[Theorem 3.3.3]{Sch-book}.
We shall work with the following hyperspaces of $\R^3$:
\[
\K=\{\text{convex compacta in $\R^3$}\}\vspace{4pt}
\]
\[
\K_s=\{\text{convex compacta in $\R^3$ 
with Steiner point at the origin}\}\vspace{4pt}
\]
\[
\K_s^{k\le l}=
\{\text{$D\in\K_s$ with $k\le \dim(D)\le l$}\}\vspace{4pt}
\]
\[
\B_p=
\{\text{convex bodies $D\in \K_s$
with $\Cinfty$ boundary of $\sec>0$}\}\vspace{4pt}
\]
\[
\B_d=
\{\text{convex bodies $D\in \K_s$
with intrinsically $\Cinfty$ boundary metrics}\}.\vspace{4pt}
\]
\[
\B^{k,\a}=
\{\text{$C^{k,\a}$ convex bodies in $\K_s$}\}\text{\quad and \quad $\B^k=\B^{k,0}$.}\vspace{6pt}
\]
One purpose of this paper
is to give an exposition of fundamental (but not widely known) 
results of convex surface theory, which easily imply the following.

\begin{thm}
\label{thm: intro GH vs hyperspaces}
The map $\K_s^{2\le 3}/O(3)\to 
\M_{\scriptscriptstyle{\mathrm{curv}\ge 0}}^{\mathrm{\scalebox{.54}{$\mathrm{GH}$}}}(S^2)$
that assigns to the congruence class of a convex compactum
the isometry class of its boundary surface is a homeomorphism
which restricts to
homeomorphisms $\B_d/O(3)\to
\M_{\scriptscriptstyle{\mathrm{sec}\ge 0}}^{\mathrm{\scalebox{.54}{$\mathrm{GH}$}}}(S^2)$
and $\B_p/O(3)\to 
\M_{\scriptscriptstyle{\mathrm{sec}> 0}}^{\mathrm{\scalebox{.54}{$\mathrm{GH}$}}}(S^2)$.
\end{thm}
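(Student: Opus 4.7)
The plan is to construct the map at the level of $\K_s^{2\le 3}$, show $O(3)$-invariance and bijectivity using Alexandrov's isometric realization theorem together with the Alexandrov--Pogorelov rigidity of convex surfaces, and then prove both directions are continuous. Given $D\in\K_s^{2\le 3}$, either $\d D$ (if $\dim D=3$) or the double of $D$ along its boundary curve (if $\dim D=2$) is a topological $S^2$ carrying the induced intrinsic metric, which has curvature $\ge 0$ in the Alexandrov sense. Because $O(3)$ preserves both the Steiner point and the ambient metric, this assignment descends to a well-defined map $\bar\Phi\co\K_s^{2\le 3}/O(3)\to\M_{\mathrm{curv}\ge 0}^{\mathrm{GH}}(S^2)$.

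Surjectivity of $\bar\Phi$ is Alexandrov's realization theorem: every nonnegatively curved intrinsic metric on $S^2$ is isometric to the boundary of a convex compactum in $\R^3$ of dimension $2$ or $3$, and translating by the negative of the Steiner point puts this realization in $\K_s^{2\le 3}$. Injectivity is the classical rigidity statement (Cauchy for polytopes, extended to general convex surfaces by Alexandrov and Pogorelov): intrinsically isometric boundaries of convex compacta force the compacta to be congruent in $\R^3$, so when both have Steiner point at the origin they differ by an element of $O(3)$.

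For continuity of $\bar\Phi$ I would invoke the standard convex-surface theorem that Hausdorff convergence of convex compacta of dimension $\ge 2$ implies Gromov--Hausdorff convergence of the induced boundary intrinsic metrics; this follows from the stability of geodesics on convex surfaces under Hausdorff perturbation. The main obstacle is the continuity of $\bar\Phi^{-1}$, which I would obtain by a compactness-and-uniqueness argument. Given $[g_n]\to[g]$ in the target with $\K_s^{2\le 3}$-realizations $D_n$, the bounded Gromov--Hausdorff diameters of $(S^2,g_n)$ control the extrinsic diameters of the $D_n$ (since intrinsic diameter on a convex surface dominates the extrinsic diameter), so the Blaschke selection theorem provides a Hausdorff limit $D^\ast\in\K_s$ along a subsequence. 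Nondegeneracy of $[g]$ prevents collapse to dimension $\le 1$, so $D^\ast\in\K_s^{2\le 3}$; continuity of $\bar\Phi$ identifies the boundary metric of $D^\ast$ with $[g]$, and injectivity forces $[D^\ast]=\bar\Phi^{-1}([g])$. A standard subsequence-of-subsequences argument then upgrades this to full-sequence convergence.

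For the two restrictions I would invoke the regularity dichotomies of convex surface theory. By definition, $D\in\B_d$ exactly when the intrinsic metric on $\d D$ is $C^\infty$, and this matches $\M_{\mathrm{sec}\ge 0}^{\mathrm{GH}}(S^2)$ via Alexandrov's realization. For $\B_p$ one uses Pogorelov's regularity theorem: if the intrinsic metric on $\d D$ is $C^\infty$ with strictly positive Gaussian curvature then $\d D$ is a $C^\infty$ convex surface in $\R^3$ of positive extrinsic curvature, and conversely. Thus $\bar\Phi$ restricts to the claimed bijections, and these inherit bicontinuity from $\bar\Phi$.
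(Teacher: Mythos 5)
Your proposal is correct and follows the same overall skeleton as the paper: Alexandrov's realization theorem for surjectivity, Pogorelov's rigidity for injectivity, continuity of the Hausdorff-to-GH map via the standard stability of boundary metrics under Hausdorff perturbation, and Pogorelov--Nirenberg regularity plus Hadamard's theorem for the $\B_p$ restriction. The one place where you genuinely diverge is the continuity of the inverse map. You run a compactness-and-uniqueness argument (Blaschke selection, forward continuity, rigidity, then a subsequence-of-subsequences upgrade), which makes no use of quantitative stability. The paper's primary route instead appeals to Volkov's stability theorem (an $\e$-isometry of intrinsic metrics is a $C\e^\beta$-isometry of extrinsic metrics) together with a result of Alestalo--Trotsenko--V\"ais\"al\"a to convert an approximate Euclidean isometry into a genuine one, obtaining quantitative near-congruence; the paper does, however, sketch in parentheses a ``less heavy-handed'' Blaschke-type argument close in spirit to yours. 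Your version is more elementary and suffices for bare bicontinuity, but yields no modulus of continuity, whereas the paper's Volkov route gives an explicit H\"older-type estimate. Two minor elisions worth flagging (the paper is also terse here): (i) the continuity of $D\mapsto\partial D$ in the degenerate case $\dim D=2$ needs a separate argument --- the paper reduces the double $DK$ to a thin right cylinder over $K$ and then applies the nondegenerate case from~\cite[Lemma 10.2.7]{BBI}; and (ii) for surjectivity of the $\B_d$ restriction you should observe that a $C^\infty$ nonnegatively curved metric on $S^2$ can never be realized as a doubled $2$-dimensional compactum, since the double always carries singular curvature concentrated along the edge, so the Alexandrov realization of such a metric is automatically a genuine convex body and hence lands in $\B_d$.
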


Here the {\em boundary surface of a $2$-dimensional convex compactum} $K$ is
the double of $K$ along the boundary with the induced intrinsic metric.

Consider the {\em Hilbert cube\,} $Q=[-1, 1]^\o$ and its {\em radial interior\,} 
$$\Si=\{(t_i)_{i\in \o}\ \text{in}\ Q\,:\, \displaystyle{\sup_{i\in\o}|t_i| < 1} \}.$$
Here $\o$ is the set of nonnegative integers, and
the superscript $\o$ refers to the product of countably many copies of a space.
We have a canonical inclusion $\Si^\o\subset Q^{\,\o}$.
Note that $Q^{\,\o}$ and $Q$ are homeomorphic. 


This paper is a sequel to~\cite{Bel-cb} where the author 
used convex geometry and infinite dimensional topology to
determine
the homeomorphism types of $\K_s$, $\K_s^{2\le 3}$, $\B_p$, and also
derive a number of properties of their \mbox{$O(3)$-quotients}.
In particular,
in~\cite[Section 6]{Bel-cb} 
we isolated some conditions on a hyperspace
$\mathcal D$ with $\B_p\subseteq\mathcal D\subset\B^{1,1}$ that give 
the conclusion of 
Theorem~\ref{thm: intro hyperspace} below
with $\B_d$ replaced by $\mathcal D$. The conditions hold, e.g., 
if $\mathcal D\setminus\B_p$ is $\s$-compact, which includes the case 
$\mathcal D=\B_p$.
Here we verify the conditions for $\mathcal D=\B_d$.

\begin{thm} 
\label{thm: intro hyperspace}
If $E$ is a subset of $Q^{\,\o}\setminus\Sigma^\o$
homeomorphic to suspension of the real projective plane, then
there is a homeomorphism $h\co \K_s^{2\le 3}\to Q^{\,\o}\setminus E$
with $h(\B_d)=\Si^\o$.
\end{thm}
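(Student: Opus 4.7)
The strategy is to invoke the general criterion of \cite[Section 6]{Bel-cb}, which isolates conditions on a hyperspace $\mathcal D$ with $\B_p\subseteq\mathcal D\subset\B^{1,1}$ that yield the conclusion of Theorem \ref{thm: intro hyperspace} with $\B_d$ replaced by $\mathcal D$. These conditions hold whenever $\mathcal D\setminus\B_p$ is $\sigma$-compact, but this is too restrictive for $\mathcal D=\B_d$: a smooth round sphere can be intrinsically flattened along a disc, and by varying the disc one parametrizes a large family of bodies in $\B_d\setminus\B_p$ that cannot be covered by countably many compacta. The proof must therefore verify the conditions of \cite[Section 6]{Bel-cb} for $\B_d$ directly.

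The first preliminary step is the inclusion $\B_d\subseteq\B^{1,1}$. This follows from Pogorelov's regularity theorem for convex surfaces: a closed convex surface in $\R^3$ with intrinsically $C^{k,\a}$ metric, $k\ge 2$ and $\a\in(0,1)$, is of class $C^{k,\a}$ as an embedded surface. Applied with arbitrarily large $k$ and $\a\in(0,1)$, this gives that an intrinsically $C^\infty$ boundary metric produces an extrinsically $C^\infty$, in particular $C^{1,1}$, convex body.

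The main step is to produce a continuous homotopy $H\co\B_d\times[0,1]\to\B_d$ with $H(\cdot,0)=\id$ and $H(\B_d\times(0,1])\subseteq\B_p$, and to verify the remaining topological conditions from \cite[Section 6]{Bel-cb}. A natural candidate for $H$ convolves the support function $h_K\co S^2\to\R$ with a smooth approximate identity on $S^2$, translates to restore the Steiner point at the origin, and adds a small multiple of the unit ball's support function to push the spherical Hessian strictly positive. For $t>0$ the resulting support function is $C^\infty$ with strictly positive spherical Hessian, so the body lies in $\B_p$; at $t=0$ one recovers $K$ in the Hausdorff metric, with joint continuity in $(K,t)$ following from standard convolution estimates.

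The main obstacle is to verify that the whole path $t\mapsto H(K,t)$ lies in $\B_d$ when $K$ is only intrinsically smooth. Extrinsic smoothness of the convolved body at $t>0$ implies intrinsic smoothness of the boundary metric by the chain rule, but one must also control the convergence $H(K,t)\to K$ as $t\to 0^+$ so that the intrinsic regularity does not degrade: the delicate point is that intrinsic smoothness is weaker than extrinsic smoothness in the limit, and uniform $C^\infty$ convergence of support functions need not hold even though Hausdorff convergence does. Once this is settled and the remaining $Z$-set type conditions of \cite[Section 6]{Bel-cb} are in place, Theorem \ref{thm: intro hyperspace} is immediate.
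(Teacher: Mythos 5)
Your proposal contains two genuine errors, one of them fatal.

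\textbf{The inclusion $\B_d\subset\B^{1,1}$ is not a consequence of Pogorelov's regularity theorem.} Pogorelov's regularity result for convex surfaces requires \emph{positive} Gauss curvature; for nonnegatively curved metrics it simply does not apply, and intrinsic smoothness does \emph{not} imply extrinsic smoothness. Indeed, the paper's Lemma~\ref{lem: C11 approxim} shows that $\B_d\setminus\B^2$ is dense in $\K_s$, so plenty of intrinsically $C^\infty$ boundaries are only $C^{1,1}$ and not even $C^2$ as surfaces in $\R^3$. The correct statement (the paper's Lemma~\ref{lem: C11 bound}) is that any $D\in\B_d$ has $C^{1,1}$ boundary that is $C^\infty$ only at points of positive intrinsic curvature; this comes from the Guan--Li and Hong--Zuily $C^{1,1}$ isometric embedding theorem for $C^\infty$ nonnegatively curved metrics on $S^2$, combined with Pogorelov/Volkov rigidity, not from an intrinsic-regularity-implies-extrinsic-regularity statement.

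\textbf{You never engage with the condition that is actually new for $\B_d$, and the homotopy you focus on is not the bottleneck.} Once $\B_p\subset\B_d\subset\B^{1,1}$ is known, the homotopy-density, AR, SDAP, and $\sigma Z$ properties of $\B_d$ come for free from [Bel-cb, Lemmas~6.1--6.3], which hold for any hyperspace sandwiched between $\B_p$ and $\B^{1,1}$; the convolution/smoothing argument is already built into those lemmas. What actually has to be verified for $\B_d$ is the descriptive-set-theoretic condition that $\B_d\setminus\B_p$ is $F_\sigma$ in $\B_d$ and also a countable intersection of $\sigma$-compact sets, which feeds into [Bel-cb, Lemmas~6.6 and 6.9] to give $\B_d\in\M_2$ and strong $\M_2$-universality, hence $\B_d\cong\Si^\o$. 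The paper derives this condition by pulling back, via the proper $O(3)$-orbit map, the corresponding statement for $\M_{\scriptscriptstyle{\mathrm{sec}\ge 0}}^{\mathrm{\scalebox{.54}{$\mathrm{GH}$}}}(S^2)\setminus\M_{\scriptscriptstyle{\mathrm{sec}>0}}^{\mathrm{\scalebox{.54}{$\mathrm{GH}$}}}(S^2)$, which in turn is Theorem~\ref{thm: intro G_de} and rests on a Cheeger--Gromov compactness argument. Your proposal never produces this ingredient, and the ``main obstacle'' you describe (whether the smoothing path stays inside $\B_d$) is not where the difficulty lies. Finally, after $\B_d\cong\Si^\o$ is established, one still needs the absorbing-pair machinery ([Bel-cb, Lemmas~5.2, 7.1, 7.2] and $Z$-set knotting) to get the homeomorphism of pairs $(\K_s^{2\le 3},\B_d)\cong(Q^{\,\o}\setminus E,\Si^\o)$; your proposal does not address this step at all.
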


The new ingredient, stated in Theorem~\ref{thm: intro G_de} below,
follows from a version of Cheeger-Gromov compactness theorem.

\begin{thm}
\label{thm: intro G_de}
$\M_{\scriptscriptstyle{\mathrm{sec}\ge 0}}^{\mathrm{\scalebox{.54}{$\mathrm{GH}$}}}(S^2)
\setminus\M_{\scriptscriptstyle{\mathrm{sec}> 0}}^{\mathrm{\scalebox{.54}{$\mathrm{GH}$}}}(S^2)$
is an $F_\s$ subset of 
$\M_{\scriptscriptstyle{\mathrm{sec}\ge 0}}^{\mathrm{\scalebox{.54}{$\mathrm{GH}$}}}(S^2)$
and also
is a countable intersection of $\s$-compact sets.
\end{thm}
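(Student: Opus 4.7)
My plan is to use Theorem~\ref{thm: intro GH vs hyperspaces} to translate the statement into a property of $\B_d$, and then verify that property with Cheeger-Gromov compactness. Because $O(3)$ is compact, the orbit map $\pi\co\B_d\to\B_d/O(3)$ is closed and proper, so the properties ``$F_\sigma$'' and ``countable intersection of $\sigma$-compact'' transfer in both directions across $\pi$. Combined with Theorem~\ref{thm: intro GH vs hyperspaces}, it therefore suffices to prove the two properties for $\B_d\setminus\B_p$ inside $\B_d$. Pogorelov's regularity theorem for convex surfaces gives $\B_p=\{D\in\B_d:\min\kappa_D>0\}$, where $\kappa_D$ is the intrinsic Gauss curvature of $\partial D$, so $\B_d\setminus\B_p=\{D\in\B_d:\min\kappa_D=0\}$.

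For positive integers $m$ and $n$ set
$$K_{m,n}:=\bigl\{D\in\B_d:D\subset B(0,n),\ \mathrm{inj}(\partial D)\ge 1/n,\ |\nabla^k\kappa_D|\le n\ \text{for}\ 0\le k\le n,\ \min\kappa_D\le 1/m\bigr\},$$
and let $K_{\infty,n}:=\bigcap_m K_{m,n}$, which is the subset cut out by $\min\kappa_D=0$ and the same other bounds. The crux of the argument is the claim that each $K_{m,n}$ is compact in $\B_d$. Given a sequence in $K_{m,n}$, Blaschke's selection theorem extracts a Hausdorff-convergent subsequence $D_j\to D$ with $D\subset B(0,n)$. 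The uniform bounds on covariant derivatives of curvature, on injectivity radius, and on diameter let the smooth version of the Cheeger-Gromov compactness theorem for closed Riemannian surfaces extract a further subsequence along which the intrinsic metrics on $\partial D_j$ converge in $C^n$ to a smooth metric $g_\infty$ on $S^2$. By Theorem~\ref{thm: intro GH vs hyperspaces}, $(S^2,g_\infty)$ is isometric to the intrinsic metric on $\partial D$, so $D\in\B_d$, and the $C^n$-convergence of metrics together with the resulting uniform convergence of $\kappa$ transfers each inequality defining $K_{m,n}$ to $D$.

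Once compactness of the $K_{m,n}$ is in hand, the two required decompositions are
$$\B_d\setminus\B_p=\bigcup_n K_{\infty,n}\qquad\text{and}\qquad\B_d\setminus\B_p=\bigcap_m\bigcup_n K_{m,n}.$$
In either identity the inclusion $\supseteq$ uses that $\min\kappa_D\le 1/m$ for every $m$ is equivalent to $\min\kappa_D=0$, while $\subseteq$ uses that any $D\in\B_d$ carries a smooth Riemannian metric on a compact surface and hence has bounded covariant derivatives of curvature, positive injectivity radius, and bounded diameter, which places $D$ in some $K_{\infty,n}$. I expect the main obstacle to be the compactness step: running Cheeger-Gromov at the level of intrinsic metrics while simultaneously tracking Hausdorff convergence of the underlying bodies requires Theorem~\ref{thm: intro GH vs hyperspaces} to match the two limits, and each geometric bound in the definition of $K_{m,n}$ must be shown to be stable under $C^n$-convergence of Riemannian metrics on $S^2$.
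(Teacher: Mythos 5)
Your overall strategy is the same as the paper's — cut out pieces of the moduli space by diameter, injectivity radius, and $C^0$ bounds on curvature and its covariant derivatives, add a ``curvature vanishes somewhere'' condition, and invoke Cheeger--Gromov (or Anderson's version) compactness — and the reduction across the proper, closed orbit map $\pi\co\B_d\to\B_d/O(3)$ is a legitimate, if optional, detour. However, the crux step, compactness of each $K_{m,n}$ \emph{inside} $\B_d$, fails, and the failure is exactly where regularity matters.

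Bounding $|\nabla^k\kappa_D|$ only for $0\le k\le n$ gives, via Cheeger--Gromov/Anderson, subconvergence in $C^{n,\alpha'}$ to a limit metric that is merely $C^{n+1,\alpha}$, \emph{not smooth}. Your sentence ``converge in $C^n$ to a smooth metric $g_\infty$'' is where the proof breaks: the limit $g_\infty$ need not be intrinsically $\Cinfty$, so by Theorem~\ref{thm: intro GH vs hyperspaces} the limiting convex body $D$ need not lie in $\B_d$, and $K_{m,n}$ is neither compact nor even closed in $\B_d$. (One can produce an explicit escape: take a $C^{n+1,\alpha}$-but-not-$\Cinfty$ nonnegatively curved metric with $\min\kappa=0$ satisfying the bounds, realize it by a convex body outside $\B_d$, and approximate it by smooth bodies satisfying slightly relaxed bounds.) The paper sidesteps this by forming the closure $\bar Q_l^k$ in the \emph{larger} ambient space $\M_{\scriptscriptstyle{\mathrm{curv}\ge 0}}^{\mathrm{\scalebox{.54}{$\mathrm{GH}$}}}(S^2)$ — equivalently, in $\K_s^{2\le 3}$ on the convex-body side — where compactness does hold, and then observing that $\bar Q_l^k$ stays disjoint from $\M_{\scriptscriptstyle{\mathrm{sec}> 0}}^{\mathrm{\scalebox{.54}{$\mathrm{GH}$}}}(S^2)$ because the vanishing-curvature condition persists under $C^{k,\alpha}$ limits when $k\ge 2$. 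The $\s$-compact sets in the countable intersection live in this larger space, not in $\M_{\scriptscriptstyle{\mathrm{sec}\ge 0}}^{\mathrm{\scalebox{.54}{$\mathrm{GH}$}}}(S^2)$; your proof tries to keep everything inside $\B_d$, which cannot work.

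A secondary slip: you tie the derivative order $k$ and the bound to the same index $n$. For a fixed non-analytic smooth metric, $\|\nabla^k\kappa\|_{C^0}$ can grow faster than any linear function of $k$, so no single $n$ need satisfy $|\nabla^k\kappa_D|\le n$ for all $k\le n$, and your union $\bigcup_n K_{\infty,n}$ may fail to cover $\B_d\setminus\B_p$. The paper keeps two independent indices: a fixed derivative order $k$ for the $F_\s$ decomposition (union over the bound $l$), and the intersection over $k$ only appears for the $\s$-compactness claim. Both issues are repaired by adopting the paper's formulation — two indices, closures taken in $\K_s^{2\le 3}$ (equivalently $\M_{\scriptscriptstyle{\mathrm{curv}\ge 0}}^{\mathrm{\scalebox{.54}{$\mathrm{GH}$}}}(S^2)$), and intersecting back with $\B_d$ only at the end.
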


Theorem~\ref{thm: intro hyperspace} together with
results in~\cite{Bel-cb} yield a number of topological
properties for the quotients $\K_s/O(3)$, $\B_p/O(3)$, $\B_d/O(3)$,
and hence for the corresponding GH hyperspaces, as summarized below.

\begin{thm}
\label{thm: intro quotients} 
Let $M=\M_{\scriptscriptstyle{\mathrm{curv}\ge 0}}^{\mathrm{\scalebox{.54}{$\mathrm{GH}$}}}(S^2)$
and $M_0$ be the GH hyperspace of the isometry classes in $M$ represented by metrics
with trivial isometry groups.
Let $X$ be $\M_{\scriptscriptstyle{\mathrm{sec}\ge 0}}^{\mathrm{\scalebox{.54}{$\mathrm{GH}$}}}(S^2)$ or
$\M_{\scriptscriptstyle{\mathrm{sec}> 0}}^{\mathrm{\scalebox{.54}{$\mathrm{GH}$}}}(S^2)$,
and let $X_0=X\cap M_0$. Then\vspace{-1pt}
\begin{enumerate}
\item[\textup{(1)}]
$M$ is a locally compact Polish absolute retract. 
\vspace{3pt}
\item[\textup{(2)}] 
$X$ is an absolute retract that is neither Polish nor locally compact. 
\vspace{3pt}
\item[\textup{(3)}]
Any $\s$-compact subset of $X$ has empty interior.
\vspace{3pt}
\item[\textup{(4)}]
$X$ is homotopy dense in $M$, i.e., any continuous map
$Q\to M$ can be uniformly approximated by a continuous  map with image in $X$. 
\vspace{3pt}
\item[\textup{(5)}] $M_0$ is open in $M$. \vspace{3pt}
\vspace{3pt}
\item[\textup{(6)}]
If $L$ is the product of $[0,1)$  and any locally finite simplicial complex 
that is homotopy equivalent to $BO(3)$, 
then there is a homeomorphism $M_0\to L\times Q^{\,\o}$
that takes $X_0$ onto $L\times\Si^\o$.
\vspace{3pt}
\item[\textup{(7)}]
The pairs $(M_0, X_0)$
and $(Q^{\,\o}, \Si^\o)$ are locally homeomorphic, i.e., 
each point of $M_0$ has a neighborhood 
$U\subset M_0$ such that some open embedding
$h\co U\to Q^{\,\o}$ takes 
$U\cap X_0$ onto $h(U)\cap\Si^\o$.
\vspace{3pt}
\item[\textup{(8)}]
$M_0$, $X_0$ are dense but not homotopy dense in $M$, $X$, respectively.
\vspace{3pt}
\item[\textup{(9)}]
$\M_{\scriptscriptstyle{\mathrm{sec}\ge \k}}^{\mathrm{\scalebox{.54}{$\mathrm{GH}$}}}(S^2)$
and $\M_{\scriptscriptstyle{\mathrm{sec}> \k}}^{\mathrm{\scalebox{.54}{$\mathrm{GH}$}}}(S^2)$
are weakly contractible for every $\k>0$.
\end{enumerate}
\end{thm}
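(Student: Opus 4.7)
The plan is to transport each assertion via Theorem~\ref{thm: intro GH vs hyperspaces} and Theorem~\ref{thm: intro hyperspace} to a statement about the $O(3)$-quotient of the model pair $(Q^{\,\o}\setminus E,\Si^\o)$ and then invoke the corresponding result from \cite{Bel-cb}. The companion paper already proves analogues of (1)--(8) for $\K_s^{2\le 3}/O(3)$, $\B_p/O(3)$, and, more generally, for the quotient of any hyperspace $\mathcal D$ with $\B_p\subseteq\mathcal D\subset\B^{1,1}$ satisfying the conditions isolated in \cite[Section~6]{Bel-cb}. Theorem~\ref{thm: intro G_de} of the present paper verifies those conditions for $\mathcal D=\B_d$, which is precisely what allows the $\mathrm{sec}\ge 0$ GH hyperspace to be included. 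The only additional observation required is that the Steiner point is $\Iso(\R^3)$-equivariant, so the standard $O(3)$-action on $\R^3$ descends to each hyperspace and makes the identifications of Theorems~\ref{thm: intro GH vs hyperspaces} and~\ref{thm: intro hyperspace} equivariant (the equivariant refinement of the latter is carried out in \cite{Bel-cb}).

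With this translation in place I would verify the items in order. Item (1) follows because $Q^{\,\o}\setminus E$ is open in the Hilbert cube, hence locally compact Polish, and is an AR since $E$ is a $Z$-set in $Q^{\,\o}$; these properties descend to the $O(3)$-quotient because $O(3)$ is compact. Items (2) and (3) are the standard capset properties of $(Q^{\,\o}\setminus E,\Si^\o)$, preserved under compact group quotients: $\Si^\o$ is an AR that is neither Polish nor locally compact, and every $\s$-compact subset of a capset is a $Z$-set and so has empty interior. Item (4) is the homotopy-density axiom of a capset pair, transferred by the equivariant homeomorphism. Item (5) is openness of the chiral locus: the isometry group of a convex body is detected by finitely many equalities among support-function values, so the trivial-stabilizer locus is open. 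Items (6) and (7) follow from the Chapman-type classification derived in \cite{Bel-cb}: on the free locus, the quotient map is a principal $O(3)$-bundle over an open $Q^{\,\o}$-manifold homotopy equivalent to $BO(3)\times[0,1)$, the $[0,1)$-factor encoding the open collar formed by the $2$-dimensional stratum $\K_s^{2\le 2}/O(3)$; Chapman's triangulation theorem splits the base as $L\times Q^{\,\o}$, and the $O(3)$-equivariant form of Theorem~\ref{thm: intro hyperspace} carries the capset onto $L\times\Si^\o$. Statement (7) then holds because $(Q^{\,\o},\Si^\o)$ is locally self-modeled. Item (8) is immediate from (6): removing the positive-codimension non-free stratum from $L\times Q^{\,\o}$ leaves a dense but not homotopy-dense subset.

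Item (9) is the only assertion not reducible to convex-hyperspace topology. For it I would combine the homotopy density from (4) with Hamilton's theorem that normalized Ricci flow on $S^2$ starting from a smooth metric with $\sec\ge\kappa>0$ converges $C^\infty$, hence in GH topology, to a round metric whose Gauss curvature equals the spatial average of the initial one; by Gauss--Bonnet this average is $4\pi/\mathrm{Area}\ge\kappa$, so the limit round metric also has curvature $\ge\kappa$. This gives a continuous deformation of the smooth part of $\M_{\sec\ge\kappa}^{GH}(S^2)$ onto the contractible arc of round metrics of curvature $\ge\kappa$, proving weak contractibility; for the strict inequality the same argument applies because strict positive lower curvature bounds are preserved along the flow on $S^2$. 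The main obstacle in the overall reduction is (6): extending the $L\times Q^{\,\o}$ splitting from \cite{Bel-cb} so that it carries the new capset $\B_d\cong\Si^\o$ onto $L\times\Si^\o$, which is what the equivariant form of Theorem~\ref{thm: intro hyperspace} supplies. Item (9) is structurally independent and leans on the Ricci-flow input rather than on the hyperspace model.
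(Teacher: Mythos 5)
For items (1)--(8) your route is essentially the paper's: everything is delegated to \cite[Sections 8--9]{Bel-cb}, which prove these statements for the $O(3)$-quotient of any $O(3)$-invariant hyperspace locally homeomorphic to $\Si^\o$ and squeezed between $\B_p$ and $\K_s$, once Theorem~\ref{thm: intro GH vs hyperspaces} supplies the identification with the GH hyperspaces and Theorems~\ref{thm: intro hyperspace}/\ref{thm: intro G_de} show that $\B_d$ qualifies. A couple of your glosses are shaky as stand-alone arguments --- openness in (5) comes from the Slice Theorem, not from ``finitely many equalities among support-function values,'' and the ``not homotopy dense'' half of (8) is really a homotopy-type obstruction ($M_0\simeq BO(3)$ is not contractible while $M$ is an AR), not a codimension count --- but since you invoke the cited results these do not affect correctness.

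Item (9) is where you genuinely diverge, and there is a gap. The paper deduces (9) from the contractibility of $\M^{\mathrm{GH}}_{\sec>0}(S^2)$ (an AR by (2)) plus a rescaling argument: a map $f\co S^n\to \M^{\mathrm{GH}}_{\sec\ge\k}(S^2)$ extends to $F\co D^{n+1}\to\M^{\mathrm{GH}}_{\sec>0}(S^2)$; by compactness the curvature on the image of $F$ is bounded below by some $\e>0$; shrinking every metric by a fixed factor multiplies curvature by a constant $\ge 1$, so a suitably shrunk copy of $F$ lands in $\M^{\mathrm{GH}}_{\sec\ge\k}(S^2)$, and the linear shrinking homotopy from $f$ to its shrunk copy never decreases curvature, hence stays in $\M^{\mathrm{GH}}_{\sec\ge\k}(S^2)$. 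Your Ricci-flow argument instead needs the normalized Ricci flow on $S^2$ to preserve the bound $\sec\ge\k$ (resp.\ $>\k$), and this is not true as stated: the curvature evolves by $\d_t K=\Delta K+2K(K-\bar K)$, so at a spatial minimum with $0<K_{\min}<\bar K$ the reaction term is negative, and the maximum principle gives only a decaying lower bound; for an initial metric whose curvature is close to $K_{\min}$ on a large region (so $\Delta K$ is small there) the minimum genuinely decreases at first. What is preserved is positivity, not the constant $\k$. Hence your flow line can leave $\M^{\mathrm{GH}}_{\sec\ge\k}(S^2)$, and the proposed contraction is not a homotopy inside the space being contracted. (There are also unaddressed points about continuity of the time-$t$ flow map on isometry classes in the GH topology and uniformity of convergence over a compact parameter family, but the failure of bound preservation is the decisive one.) The paper's rescaling trick sidesteps all of this and uses nothing beyond (2).
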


Let us supply some context for various items in Theorem~\ref{thm: intro quotients} :

(1)--(2) 
We refer 
to~\cite{Bor-book} for background on absolute retracts (AR) and 
absolute neighborhood retracts (ANR), and only mention here some basic
facts. Any open subset of an ANR is an ANR. Being an AR is equivalent to being
a contractible ANR. Any ANR is locally 
contractible, i.e., any neighborhood $U$ of every point
contains a neighborhood $V$ of the same point such that the inclusion
$V\to U$ is null-homotopic.
Any ANR is homotopy equivalent to a CW complex.

(4) Another definition of a homotopy dense subset $A\subset B$ is that 
there is a homotopy $h\co B\times [0,1]\to B$ with $h(b,0)=b$ and $h(b,t)\in A$ for $t>0$.
The two definitions are equivalent when $B$ is an ANR~\cite[Exercise 10 in Section 1.2]{BRZ-book}.

(5)--(7) The Slice Theorem for compact Lie group 
actions~\cite[Corollary II.5.5]{Bre-book} implies that
$M_0$ is open in $M$ and the restriction of the
orbit map $\K_s^{2\le 3}\to\K_s^{2\le 3}/O(3)$ to the principal orbit
$\accentset{\circ}{\K}^{2\le 3}_s$
is a principal $O(3)$-bundle whose base is
homeomorphic to $M_0$. Similarly, $X_0$ is the base of 
a principal $O(3)$-bundle. 
By~\cite[Lemma 8.2]{Bel-cb} the principal orbit 
$\accentset{\circ}{\B}_p$ for the $O(3)$-action on $\K_s$ is homotopy dense in $\K_s$, and hence
the total spaces of the above principal bundles 
are contractible. Thus $M_0$, $X_0$ are homotopy equivalent to $BO(3)$,
the Grassmanian of $3$-planes in $\R^\o$.
The claims (6)--(7) follow from the main results of~\cite{Bel-cb} and Theorem~\ref{thm: intro hyperspace}.

(8) has a curious interpretation that there is no continuous ``destroy the 
symmetry map'' that would instantly push $M$ into $M_0$, or $X$ into $X_0$.

(9) The contractibility of these GH hyperspaces follow from the contractibility
of $\M_{\scriptscriptstyle{\mathrm{sec}>0}}^{\mathrm{\scalebox{.54}{$\mathrm{GH}$}}}(S^2)$
and a rescaling argument.

In~\cite{Bel-cb} the reader can find a number of 
open questions about the above GH hyperspaces, disguised as $O(3)$-orbit
spaces of hyperspaces of $\R^3$. For example, 
it is unknown whether
$\M_{\scriptscriptstyle{\mathrm{curv}\ge 0}}^{\mathrm{\scalebox{.54}{$\mathrm{GH}$}}}(S^2)$
is a $Q$-manifold, which by Theorem~\ref{thm: intro quotients} is equivalent to
the following.

\begin{quest}
Is $\M_{\scriptscriptstyle{\mathrm{curv}\ge 0}}^{\mathrm{\scalebox{.54}{$\mathrm{GH}$}}}(S^2)$
topologically homogeneous?
\end{quest}

A space is {\em topologically homogeneous\,} if its homeomorphism group acts
transitively.

Theorem~\ref{thm: intro GH vs hyperspaces} is proven in Section~\ref{sec: conv surfaces}
while the other main results are justified in Section~\ref{sec: proofs}. 
In Section~\ref{sec: remarks} we offer some remarks about the hyperspace $\B_d$
whose structure is still quite mysterious.

\section{Spaces on convex surfaces}
\label{sec: conv surfaces} 

In this section we review some fundamental properties of convex surfaces
and prove Theorem~\ref{thm: intro GH vs hyperspaces}.

Two subsets of $\R^3$ are {\em $\de$-congruent\,} if 
some isometry of $\R^3$ takes one subset
within the $\de$-neighborhood of the other one;
if $\de=0$ we call the subsets {\em congruent}.
A homeomorphism $f\co (A, d_A)\to (B, d_B)$ of metric spaces  is a {\em $\de$-isometry}
if $|d_B(f(x),f(y))-d_A(x,y)|<\de$ for any $x,y\in A$. 
If $\de$ is small we use the terms {\em nearly congruent} and
{\em nearly isometric}.

A {\em convex surface\,} is either the boundary of a convex body $B\subset\R^3$ 
or the double $DK$ of a $2$-dimensional convex compactum $K\subset\R^3$
along the identity map of $\d K$, each
with the induced intrinsic metric.
We refer to these two alternatives as the {\em non-degenerate\,}
and the {\em degenerate} convex surfaces, call their intrinsic
metrics the {\em boundary metrics}, and say that they {\em bound\,} $B$, $K$,
respectively. With this definition any convex surface is homeomorphic to $S^2$.

The intrinsic metric on a degenerate surface $DK$ can be canonically approximated
by the boundary metric of the right cylinder with base $K$ and small height.  

Each convex surface bounds a unique convex compactum in $\R^3$
which has dimension $2$ if the surface is degenerate and dimension $3$
otherwise. If two such convex compacta $K_1$, $K_2$ are Hausdorff close, then the corresponding convex surfaces are nearly isometric. (For non-degenerate convex surfaces
this is proved in~\cite[Lemma 10.2.7]{BBI} and the degenerate case
reduces to the non-degenerate one 
by approximating $DK$ with the cylinder as above).

Alexandrov, see~\cite{Alex-1948} 
or~\cite[pp. 112 and 399]{Ale-conv-surfaces} showed 
that an intrinsic metric isometric to a $2$-sphere of 
nonnegative curvature 
if and only if it is isometric to a convex surface.
Pogorelov proved in~\cite{Pog-uniq} 
that any two isometric convex surfaces are congruent, 
even though his argument is commonly described as very complicated,
and I hesitate to rely on it.
An easier proof of this result was found by Volkov~\cite{Vol}, 
see~\cite[Section 12.1]{Ale-conv-surfaces} for a reprint
and~\cite[Section 5.2]{BurShe} for an
exposition of Volkov's stability theorem which we discuss below.

Each non-degenerate convex surface has another metric 
obtained by restricting the distance function on $\R^3$;
we call the metric {\em extrinsic}.
If $\Si_1$, $\Si_2$ are non-degenerate convex surfaces with intrinsic metrics
$\r_1$, $\r_2$, and extrinsic metrics $d_1$, $d_2$, and if
$f\co(\Si_1,\r_1)\to(\Si_2,\r_2)$ is an $\e$-isometry, then 
Volkov stability theorem states that 
$f\co(\Si_1, d_1)\to(\Si_2, d_2)$ is an $C_1\e^{\b}$-isometry
where $C_1$ depends onto on diameters of $\r_1$, $\r_2$ and $\b$ is 
a positive universal constant. This easily implies that $\Si_1$, $\Si_2$ 
are nearly congruent, e.g., according to~\cite[Theorem 2.2]{ATV} 
any $\de$-isometry between compacta in $\R^n$
can be approximated by the restriction of an isometry of $\R^n$
with the additive error at most $C_2\sqrt{\de}$ where $C_2$ depends only on $n$
and the diameters of the compacta. 

To extend the result to the case of a degenerate surface $DK$
we replace it with a nearby right cylinder with base $K$, and then
apply Volkov's theorem. 

If the isometry classes of two convex surfaces are GH close, then
the surfaces are nearly isometric,
e.g., by the Perelman stability theorem~\cite{Kap-Perelman-Stab}.
(A less heavy-handed argument is as follows. 
For a convex surface $\Si$ we denote its isometry class by $[\Si ]$.
If $\Si_i$, $\Si$ are convex surfaces such that the sequence
$[\Si_i]\to[\Si]$ in the GH metric, then 
up to congruence $\bigcup_i\Si_i$ has compact closure in $\R^3$
and any limit point of the sequence $\Si_i$ with respect to the Hausdorff metric
is congruent to $\Si$, which by above gives the desired near isometry 
of $\Si$ and $\Si_i$ for large $i$).

The map $r\co\K\to\K_s$
given by $r(D)=D-s(D)$ where $s$ is the Steiner point
descends to a homeomorphism of orbit spaces $\K/\Iso(\R^3)\to\K_s/O(3)$,
see~\cite[Section 4]{Bel-cb}.
Note that the homeomorphism is dimension preserving.

A {\em $C^{k,\a}$ convex body\,} is a convex body whose boundary is 
a $C^{k,\a}$ submanifold of $\R^n$. A function is $C^{k,\a}$ if its
$k$th partial derivatives are 
$\a$-H\"older for $\a\in (0,1]$ and continuous for $\a=0$.
As usual $C^k$ means $C^{k,0}$.

\begin{lem}
\label{lem: C11 bound}
Any convex body $D\in\B_d$ has $C^{1,1}$ boundary that
is $C^\infty$ at points of intrinsically positive curvature.
In particular, if the boundary metric is intrinsically $C^\infty$
of positive sectional curvature, then $D\in\B_p$.
\end{lem}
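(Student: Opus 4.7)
The plan is to derive the regularity of $\d D$ from classical interior regularity theory for convex surfaces with prescribed intrinsic metric. Let $g$ be a $C^\infty$ Riemannian metric on $S^2$ isometric to the intrinsic boundary metric of $D$ (guaranteed by the definition of $\B_d$), and let $\varphi\co(S^2,g)\to\d D$ denote the isometry. The Gauss curvature $K$ of $g$ is a nonnegative $C^\infty$ function on $S^2$, the nonnegativity reflecting the fact that $\d D$ is a convex surface (Alexandrov).

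For the smoothness claim, I fix $p_0\in S^2$ with $K(p_0)>0$ and a neighborhood $U\ni p_0$ on which $K>0$. Then $\d D$ is strictly convex near $\varphi(p_0)$, so a piece there is the graph of a convex function $u$ over its supporting tangent plane. Requiring the induced metric on this graph to coincide with $g|_U$ translates into a Monge--Amp\`ere equation
\[
\det D^2 u \;=\; K\,(1+|\nabla u|^2)^2
\]
whose right-hand side is smooth and strictly positive. The Nirenberg--Pogorelov regularity theorem for elliptic Monge--Amp\`ere equations (see \cite{Pog-uniq} and the exposition in~\cite{Ale-conv-surfaces}) then upgrades any convex solution to $C^\infty$, so $\d D$ is $C^\infty$ in a neighborhood of $\varphi(p_0)$.

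The main obstacle is the global $C^{1,1}$ regularity at points where $K$ may vanish. I would invoke Pogorelov's \emph{a priori} $C^2$ estimates for the isometric embedding problem, which yield a uniform upper bound on the principal curvatures $k_1,k_2$ of $\d D$ in terms of the intrinsic geometry of $g$ alone, with no dependence on a positive lower bound for $K$. Combined with the convexity bound $k_1,k_2\ge 0$, this two-sided control on the second fundamental form promotes the generic $C^{1}$ regularity of a convex body's boundary parametrization to $C^{1,1}$; that $C^{1,1}$ is the best possible exponent is classical.

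Finally, the ``in particular'' clause is immediate from the two parts: if $K>0$ everywhere on $S^2$ then $\d D$ is $C^\infty$ everywhere by the first claim, and for a smooth surface in $\R^3$ the Gauss equation yields $K=k_1k_2$; convexity gives $k_1,k_2\ge 0$, so $K>0$ forces $k_1,k_2>0$, and hence $\d D$ has positive sectional curvature, i.e.\ $D\in\B_p$.
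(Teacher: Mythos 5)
Your approach starts from the right place (let $g$ be the smooth metric and try to show that $\partial D$ inherits regularity from $g$), and your local-smoothness argument via an elliptic Monge--Amp\`ere equation at points of positive curvature is essentially what Nirenberg and Pogorelov did; it can be made rigorous using Alexandrov/viscosity solutions plus Pogorelov--Caffarelli interior regularity. However, the global $C^{1,1}$ part has a genuine gap, and it is the heart of the lemma.

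The problem is that you are treating the a priori estimates for the Weyl isometric embedding problem as if they bound the principal curvatures of the given surface $\partial D$ directly. But $\partial D$ is, a priori, just the boundary of a convex body: you have no starting regularity, so its ``principal curvatures'' don't yet exist in any pointwise sense, and an a priori $C^2$ estimate for solutions of a PDE is of no use until you know $\partial D$ is a solution with enough regularity to run a continuity argument. Moreover the estimate you invoke is misattributed: Pogorelov's second--fundamental--form bound requires a positive lower bound for $K$. The $K\ge 0$ case, giving exactly the $C^{1,1}$ conclusion with $C^\infty$ at points where $K>0$, is due to Guan--Li and Hong--Zuily, and it produces a \emph{new} isometric embedding $\iota\colon(S^2,g)\to\R^3$ as a limit of smooth positively curved embeddings; it does not say anything about $\partial D$ yet. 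The missing step is a rigidity argument: one must know that the image of $\iota$ is a convex surface (this follows from Hadamard's theorem applied to the positively curved approximants) and then invoke Pogorelov--Volkov uniqueness of convex surfaces with a given intrinsic metric to conclude that $\iota(S^2)$ is congruent to $\partial D$. Only then does the regularity of the constructed embedding transfer to $D$. This rigidity step is exactly what the paper uses and what your proposal omits; without it there is no logical bridge between the regular embedding you can build and the body $D$ you were handed. The ``in particular'' clause at the end is fine once the rest is fixed.
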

\begin{proof}
The last statement was proved much earlier by Pogorelov and 
Nirenberg (independently). The boundary $\d D$ is the image
of an isometric embedding of the distance function of 
a $\Cinfty$ nonnegatively curved metric $g$ on $S^2$. Improving on
Nirenberg's method Guan-Li~\cite{GuaLi} and Hong-Zuily~\cite{HonZui}
independently proved that any $C^\infty$ nonnegatively curved 
metric of $S^2$ admits a $C^{1,1}$ isometric embedding into $\R^3$ 
that is $C^\infty$ at points of positive curvature, and moreover
the embedding is the limit of a sequence of $C^\infty$ isometric embeddings of
positively curved metrics on $S^2$. By Hadamard theorem, see e.g.,~\cite[Chapter 2]{Spi-III}, 
the image of an isometric embedding
of positively curved sphere bounds a convex body, and hence the same
is true for the limiting $C^{1,1}$ isometric embedding that induces $g$. 
The limiting convex body is congruent to $D$ by the the above mentioned
results of Pogorelov and Volkov. 
\end{proof}

The above discussion proves Theorem~\ref{thm: intro GH vs hyperspaces}.

\section{Proofs of main results}
\label{sec: proofs}

\begin{proof}[Proof of Theorem~\ref{thm: intro G_de}]
For integers $k\ge 2$, $l\ge 1$ let 
$Q_l^k\subset \M_{\scriptscriptstyle{\mathrm{sec}\ge 0}}^{\mathrm{\scalebox{.54}{$\mathrm{GH}$}}}(S^2)$
be the subset
consisting of isometry classes of metrics whose sectional curvature vanishes 
somewhere, the diameter is in $[0,l]$, 
the injectivity radius is at least $1/l$, and the $\Cz$ norms of
the curvature tensor and of every covariant derivative 
of the curvature tensor of orders $1,\dots, k$ 
is at most $l$.
Its closure $\bar Q_l^k$ in 
$\M_{\scriptscriptstyle{\mathrm{curv}\ge 0}}^{\mathrm{\scalebox{.54}{$\mathrm{GH}$}}}(S^2)$ is 
compact and disjoint from 
$\M_{\scriptscriptstyle{\mathrm{sec}> 0}}^{\mathrm{\scalebox{.54}{$\mathrm{GH}$}}}(S^2)$
because for each $\a\in (0,1)$ any sequence in $Q_l^k$ subconverges in the $\Cka$ topology 
to an isometry class of a $\Ckoa$
Riemannian manifold, see e.g.~\cite[Theorem 2.2]{And-conv}, 
and since $k\ge 2$ the sectional curvature must vanish in the limit.
For each $k$ we clearly have  \[
\M_{\scriptscriptstyle{\mathrm{sec}\ge 0}}^{\mathrm{\scalebox{.54}{$\mathrm{GH}$}}}(S^2)
\setminus 
\M_{\scriptscriptstyle{\mathrm{sec}> 0}}^{\mathrm{\scalebox{.54}{$\mathrm{GH}$}}}(S^2)=
\bigcup_{l\ge 1}\,\bar Q_l^k\cap 
\M_{\scriptscriptstyle{\mathrm{sec}\ge 0}}^{\mathrm{\scalebox{.54}{$\mathrm{GH}$}}}(S^2)
\] which is $F_\s$ in 
$\M_{\scriptscriptstyle{\mathrm{sec}\ge 0}}^{\mathrm{\scalebox{.54}{$\mathrm{GH}$}}}(S^2)$.
The $\s$-compact set $\bigcup_{_{l\in\o}}\! \bar Q_l^k$ in
$\M_{\scriptscriptstyle{\mathrm{curv}\ge 0}}^{\mathrm{\scalebox{.54}{$\mathrm{GH}$}}}(S^2)$
\begin{itemize}\vspace{-2pt} 
\item consists of the isometry classes of $\Ckoa$ Riemannian manifolds,\vspace{2pt} 
\item contains
$\M_{\scriptscriptstyle{\mathrm{sec}\ge 0}}^{\mathrm{\scalebox{.54}{$\mathrm{GH}$}}}(S^2)
\setminus 
\M_{\scriptscriptstyle{\mathrm{sec}> 0}}^{\mathrm{\scalebox{.54}{$\mathrm{GH}$}}}(S^2)$,
\vspace{2pt} 
\item and is disjoint from 
$\M_{\scriptscriptstyle{\mathrm{sec}> 0}}^{\mathrm{\scalebox{.54}{$\mathrm{GH}$}}}(S^2)$.
\vspace{-2pt} 
\end{itemize}
Thus 
$\M_{\scriptscriptstyle{\mathrm{sec}\ge 0}}^{\mathrm{\scalebox{.54}{$\mathrm{GH}$}}}(S^2)
\setminus 
\M_{\scriptscriptstyle{\mathrm{sec}> 0}}^{\mathrm{\scalebox{.54}{$\mathrm{GH}$}}}(S^2)$
equals
$\bigcap_{_{k\ge 2}}\bigcup_{_{l\ge 1}}\! \bar Q_l^k$ as claimed.
\end{proof}

Now the results of~\cite{Bel-cb} can be put together to yield
what we claimed in the introduction. To justify this we are going to use some 
infinite dimensional topology terminology that can be found in~\cite[Section 3]{Bel-cb}.

\begin{proof}[Proof of Theorem~\ref{thm: intro hyperspace}]
First we show that $\B_d$ is homeomorphic to $\Si^\o$. 
By Lemma~\ref{lem: C11 bound} we have $\B_p\subset\B_d\subset\B^{1,1}$, 
hence~\cite[Lemmas 6.1--6.3]{Bel-cb} show that $\B_d$ is an AR with SDAP and also $\s Z$.

The $O(3)$-orbit maps from $\B_d$ and $\B_p$ onto
the sets of congruence classes are continuous and proper.
Taking preimage of a proper continuous map preserves being $F_\s$ and being $\s$-compact
so preimages $\B_p\setminus \B_p$ is $F_\s$ in $\B_d$ and also
is a countable intersection of $\s$-compact sets.
Hence~\cite[Lemmas 6.6 and 6.9]{Bel-cb} imply that $\B_d\in\M_2$
and $\B_d$ is strongly $\M_2$-universal. These properties imply that
$\B_d$ is homeomorphic to $\Si^\o$.

Then the pair $(\K_s^{2\le 3}, \B_d)$ is $(\M_0,\M_s)$-absorbing
by~\cite[Lemma 7.1]{Bel-cb}.

Also~\cite[Lemma 5.2]{Bel-cb} shows that 
$\K_s^{2\le 3}$ is homeomorphic to the complement in $Q^{\,\o}$ of a $Z$-set
homeomorphic to the suspension $SRP^2$ over $RP^2$.
Since $\Si^\o$ is convex and dense in $Q^{\,\o}$, it is
also homotopy dense in $Q^{\,\o}$, see~\cite[Exercise 13 in 1.2]{BRZ-book}.
Hence every compact subset of $Q^{\,\o}\setminus\Si^\o$ is a $Z$-set.
If $E$ is as in the statement of Theorem~\ref{thm: intro hyperspace}, then
by the knotting of $Z$-sets in $Q$-manifolds~\cite[Theorem 1.1.25]{BRZ-book} 
the set $Q^{\,\o}\setminus E$ can be taken to $\K_s^{2\le 3}$ 
by some homeomorphism of $Q^{\,\o}$. The pair $(Q^{\,\o}\setminus E,\Si^\o)$
is $(\M_0,\M_s)$-absorbing by~\cite[Lemma 7.2]{Bel-cb}.
Now the uniqueness of absorbing pairs~\cite[Lemma 7.2]{Bel-cb}
proves Theorem~\ref{thm: intro hyperspace} for $\B_d$. 
The same argument works for $\B_p$.
\end{proof}

\begin{proof}[Proof of Theorem~\ref{thm: intro quotients}]
The statements (1)-(8) of Theorem~\ref{thm: intro quotients} were proved
in~\cite[Section 8--9]{Bel-cb} 
for the $O(3)$-quotients of an arbitrary $O(3)$-invariant hyperspace $X$
that is locally homeomorphic to $\Si^\o$ and such that $\B_p\subset X\subset\K_s$. 
The statement (9) was explained in~\cite[Question (g) of Section 1]{Bel-cb}.
\end{proof}

\section{Remarks on the structure of $\B_d$}
\label{sec: remarks}

The hyperspace $\B_d$, which is the main object of his paper,
is not well-understood, e.g., I suspect that $\B_d$
is not convex but cannot yet prove it. This section
is to shed some light on the properties of $\B_d$.

The moral of Theorem~\ref{thm: intro G_de} is that the awkward features of $\B_d$
disappear in $\B_d/O(3)$, as they should because the condition of being
intrinsically $\Cinfty$ makes much more sense in
$\M_{\scriptscriptstyle{\mathrm{sec}\ge 0}}^{\mathrm{\scalebox{.54}{$\mathrm{GH}$}}}(S^2)$. 

Recall that $\B_p\subset\B^\infty\subset\B_d\subset\B^{1,1}$.
It turns that $\B_d\setminus\B^\infty$ is quite large.

\begin{lem}
\label{lem: C11 approxim}
$\B_d\setminus\B^2$ is dense in $\K_s$.
\end{lem}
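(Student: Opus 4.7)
The plan is to reduce to approximating smooth convex bodies, then perturb each such body by a small intrinsically flat patch whose embedded image has a discontinuous second fundamental form.

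First I would recall that $\B^\infty$ is Hausdorff dense in $\K_s$: any convex compactum can be approximated by strictly convex $C^\infty$ bodies, for instance via Minkowski-summing with a small ball and mollifying the support function. Since $\B^\infty\subset\B_d$, it suffices to show that each $D_0\in\B^\infty$ lies in the Hausdorff closure of $\B_d\setminus\B^2$.

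Fix $D_0\in\B^\infty$ and let $g_0$ be the induced $C^\infty$ metric of nonnegative curvature on $\d D_0\cong S^2$. For each small $\de>0$ I would produce a $C^\infty$ metric $g_\de$ on $S^2$ of nonnegative Gaussian curvature such that (i) $g_\de=g_0$ outside a small disk $U_\de$, (ii) $g_\de$ is identically flat on a strictly smaller disk $V_\de\subset U_\de$, and (iii) $\|g_\de-g_0\|_{C^0}\to 0$ as $\de\to 0$. Such $g_\de$ can be built by a compactly supported conformal change $g_\de=e^{2u_\de}g_0$ with $\Delta_{g_0}u_\de=K_{g_0}$ on $V_\de$, smoothly cut off in $U_\de\setminus V_\de$, together with a tiny off-center curvature bump to satisfy any Kazdan--Warner integral obstruction (harmless at this scale). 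By Alexandrov realization together with the Pogorelov--Volkov uniqueness and stability results reviewed in Section~\ref{sec: conv surfaces}, each $g_\de$ is the boundary metric of a unique convex body $D_\de\in\K_s$, and $d_H(D_\de,D_0)\to 0$. By construction $D_\de\in\B_d$.

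It remains to check that $D_\de\notin\B^2$ for small $\de$. Suppose $\d D_\de$ were $C^2$; then the second fundamental form is continuous across $\d V_\de$. On $V_\de$ the intrinsic Gauss curvature vanishes, so by the Gauss equation one principal curvature is $0$ and the image is developable, with the other principal curvature along $\d V_\de$ dictated by the cylinder or cone realizing the embedded flat disk. By continuity of the intrinsic curvature, one principal curvature also vanishes at $\d V_\de$ from the curved side, so the remaining (mean) curvature would have to match. But on the curved side this mean curvature is rigidly determined by $g_0|_{S^2\setminus U_\de}$ via Pogorelov--Cohn-Vossen rigidity, while on the flat side it depends only on the intrinsic geometry of $V_\de$; for generic placement and size of $V_\de$ these two quantities disagree at some point of $\d V_\de$, contradicting continuity. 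The main obstacle is making this nonmatching step rigorous for concrete $V_\de$; an effective strategy is to vary the center of $V_\de$ on $\d D_0$ and observe that universal matching over an open family of centers would force a restrictive identity on the mean curvature of $\d D_0$ that a generic body in $\B^\infty$ fails to satisfy.
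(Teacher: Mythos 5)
Your proposal goes intrinsically: perturb the metric $g_0$ to a flat patch, realize by Alexandrov--Pogorelov, and then argue that the embedded image of the flat/curved interface cannot be $C^2$. The paper goes extrinsically: it exhibits one explicit convex graph $x_3=r^3$ that is $C^{1,1}$ but not $C^2$ while its induced metric is (visibly, from the formula $g_{ij}=\de_{ij}+9r^2x_ix_j$) intrinsically $\Cinfty$, and then uses Ghomi's $\Cinfty$ convex patching theorem to splice a neighborhood of the origin of this graph into an arbitrary element of $\B_p$, producing an element of $\B_d\setminus\B^2$ nearby. Density of $\B_p$ in $\K_s$ then finishes the argument. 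The two routes are genuinely different, and the paper's is decisive because the non-$C^2$-ness is \emph{built into} the model surface, whereas you have to \emph{prove} it for the realization of your modified metric.

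That last step is precisely where your argument has a real gap, and you candidly flag it yourself. The claim that a $C^2$ boundary with a flat region next to a nonnegatively curved region forces a discontinuous second fundamental form is not obvious: your construction only guarantees $K_{g_\de}\ge 0$ on the annulus, and $K$ may vanish to high order there, so the ``curved side'' need not have a well-defined nonzero principal curvature along $\d V_\de$ to compare against. The appeal to ``Pogorelov--Cohn-Vossen rigidity'' determining the mean curvature from the metric on the positively curved part, and to ``generic placement of $V_\de$'' to produce a mismatch, is not an argument but a hope; as stated it could well fail for every placement. There is a second, earlier gap: after conformally flattening on $V_\de$ and cutting off, you must keep $K_{g_\de}\ge 0$ \emph{pointwise} on the transition annulus, and the cutoff will typically create regions of negative curvature; the ``tiny off-center bump'' addresses only the Gauss--Bonnet integral constraint, not this pointwise one. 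Both issues are sidestepped in the paper by working directly with the explicit surface $z=r^3$, whose non-$C^2$-ness at the origin is immediate, and by using Ghomi's patching, which is specifically engineered to preserve convexity and smoothness away from the patched disk.
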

\begin{proof}
The convex surface $x_3=f(x_1,x_2)=r^3$, where $r=\sqrt{x_1^2+x_2^2}$, 
is $C^{1,1}$ but not $C^2$.
Its boundary metric is intrinsically $\Cinfty$ because
the components of the metric tensor induced on the graph of $f\co\R^n\to\R$ 
are $g_{ij}=\de_{ij}+\frac{\d f}{\d x_i}\frac{\d f}{\d x_j}$ and
for $f$ as above we have $\frac{\d f}{\d x_i}=3rx_i$.

A small neighborhood of the origin in this surface can be patched as in~\cite{Gho-jdg2001}
at any point of positive curvature of every $\Cinfty$ convex surface
to produce a convex surface that has positive curvature everywhere except at
one point near which it the graph of $f$.
The conditions of Ghomi's patching theorem are satisfied
because in the $x_1,x_2$ local coordinates any positively curved surface
lies above the graph of $g(x_1,x_2)=kr^2$ for some $k>0$,
and hence $kr^2>r^3$ for small $r$. 
Thus $\B_p$ lies in the closure of $\B_d\setminus\B^2$ in $\K_s$, 
and the claim follows by noting that
by Schneider's regularization  $\B_d$ is dense in $\K_s$, see e.g.~\cite[Section 4]{Bel-cb}.
\end{proof}
 
In Theorem~\ref{thm: intro hyperspace} we show that $\B_d$ is homeomorphic to
$\Si^\o$. The same is true for any hyperspace $\B_p\subseteq\mathcal D\subset\B^{1,1}$
such that $\mathcal D\setminus\B_p$ is $\s$-compact~\cite[Theorem 6.10]{Bel-cb}.
Perhaps this conclusion holds for any naturally occurring hyperspace 
$\mathcal D$ with $\B_p\subseteq\mathcal D\subset\B^{1}$, and
while thinking on this problem one wants an example of a hyperspace
that is not homeomorphic to $\Si^\o$. 

In~\cite[Theorem 6.11]{Bel-cb} one finds
a hyperspace $\mathcal D$ with $\B_p\subset\mathcal D\subset\B^{1,1}$ 
such that $\mathcal D\setminus\B_p$ embeds into the Cantor set, 
$\B_p$ is open in $\mathcal D$, and $\mathcal D$ is not a topologically
homogeneous, and in particular, not homeomorphic to $\Si^\o$.
We improve this example as follows.

\begin{prop}
There is a hyperspace $\mathcal D$ with $\B_p\subset\mathcal D\subset\B_d\cap\B^{2,1}$ 
such that $\mathcal D\setminus\B_p$ embeds into the Cantor set, 
$\B_p$ is open in $\mathcal D$, and $\mathcal D$ is not a topologically
homogeneous.
\end{prop}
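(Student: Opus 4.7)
The plan is to follow the construction of~\cite[Theorem 6.11]{Bel-cb} verbatim, except that the ``bad set'' $\mathcal D\setminus\B_p$ is populated by Ghomi-patched convex bodies from the proof of Lemma~\ref{lem: C11 approxim} so that each of its elements lies in $\B_d\cap\B^{2,1}$. The first task is to verify that the surface $x_3=r^3$ used there actually belongs to $\B_d\cap\B^{2,1}$: its induced metric tensor $g_{ij}=\de_{ij}+9r^2 x_i x_j$ is a polynomial in $(x_1,x_2)$ and hence intrinsically $\Cinfty$, and a direct calculation shows that every second partial of $r^3$ extends continuously to the origin with bounded distributional third derivatives, so the Hessian of $r^3$ is globally Lipschitz and the surface is $C^{2,1}$. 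Since its Gauss curvature vanishes at the apex, the patched closed convex surface produced as in Lemma~\ref{lem: C11 approxim} lies in $\B_d\cap\B^{2,1}\setminus\B_p$.

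Next I would introduce a real parameter into the patching by replacing $r^3$ with $\l r^3$ for $\l\in[1,2]$ in a local chart around a fixed $p\in\d D_0$, where $D_0\in\B_p$ is a fixed reference body. Ghomi's construction and Steiner normalization both depend continuously on the input, so one obtains a family $\{D_\l\}_{\l\in[1,2]}\subset\B_d\cap\B^{2,1}\setminus\B_p$ varying continuously in the Hausdorff metric. Distinct values of $\l$ yield non-isometric boundary metrics in the patched region (the intrinsic Gauss curvature there depends non-trivially on $\l$), so $\l\mapsto D_\l$ is a continuous injection of a compact interval and hence a topological embedding. Within this embedded arc I would then select a subset $C$ homeomorphic to whichever $0$-dimensional separable metrizable space~\cite[Theorem 6.11]{Bel-cb} uses for its bad set; since every such space embeds into the Cantor set and hence into $[1,2]$, this choice is always available, and $C\subset\B_d\cap\B^{2,1}\setminus\B_p$ by construction.

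Setting $\mathcal D:=\B_p\cup C$, the inclusions $\B_p\subset\mathcal D\subset\B_d\cap\B^{2,1}$ and the embedding of $\mathcal D\setminus\B_p=C$ into the Cantor set are immediate, and $\B_p$ is open in $\mathcal D$ because $C$ is compact in $\K_s$ and hence closed in $\mathcal D$. The failure of topological homogeneity follows verbatim from~\cite[Theorem 6.11]{Bel-cb}: the argument there is purely topological and depends only on $\B_p$ being a dense open subset of $\mathcal D$ homeomorphic to $\Si^\o$ (by the $\mathcal D=\B_p$ case of Theorem~\ref{thm: intro hyperspace}) together with $C$ being a $0$-dimensional closed subset of the appropriate type. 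The main obstacle is the first step: verifying that the $r^3$-patched surfaces simultaneously have intrinsic $\Cinfty$ boundary metric and $C^{2,1}$ regularity as submanifolds, which is the strict refinement of the $C^{1,1}$ regularity used in~\cite[Theorem 6.11]{Bel-cb}.
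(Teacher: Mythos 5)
Your approach differs from the paper's mainly in the choice of base example. The paper appeals to a modification of an explicit body from \cite{Iai-example} whose boundary is $C^\infty$ except at one point where it is $C^{2,1}$ but not $C^3$, has intrinsically $C^\infty$ boundary metric, and has curvature vanishing exactly at the irregular point. You instead re-use the Ghomi-patched $r^3$ cap from Lemma~\ref{lem: C11 approxim}. Your regularity computation for $r^3$ is correct: the Hessian of $r^3$ has eigenvalues $3r$ and $6r$, which extend continuously by $0$ to the apex, while the third partials are bounded, so the graph is $C^{2,1}$ but not $C^3$. (This is actually at odds with the statement in the proof of Lemma~\ref{lem: C11 approxim}, which asserts the graph is $C^{1,1}$ but not $C^2$.) So both base examples do lie in $\B_d\cap\B^{2,1}\setminus\B_p$, and the choice between them is a matter of taste.

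The genuine gap is in the passage from a single example to a continuous path. You assert that Ghomi's patching, together with the Steiner normalization, depends continuously on the parameter $\lambda$ in $\lambda r^3$. Continuity of the Steiner map is standard, but Ghomi's gluing in \cite{Gho-jdg2001} is a nontrivial construction, and continuity of its output in the shape of the inserted cap is not something one may assert without an argument or a reference. The paper sidesteps this entirely: it fixes one irregular body and then perturbs its boundary by a compactly supported $C^\infty$ bump at a smooth point of positive curvature away from the singularity. That gives a manifestly continuous one-parameter family, keeps the singular apex untouched (so each body remains in $\B_d\cap\B^{2,1}\setminus\B_p$), and yields non-congruent bodies for distinct parameters. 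Replacing your $\lambda$-family by a bump-perturbation of a single patched body would close the gap; after that, the reduction to the proof of \cite[Theorem 6.11]{Bel-cb} proceeds exactly as you describe and as in the paper.
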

\begin{proof}
A slight modification of an example in~\cite{Iai-example} 
gives a $3$-dimensional convex body whose boundary is $C^\infty$ except at one point 
$p$ where it is $C^{2,1}$ but not $C^3$, and such that the boundary metric is 
intrinsically $C^\infty$. The curvature vanishes at $p$ and is positive elsewhere. 
Any slight smooth perturbation at a boundary point of positive curvature
gives a body with the same properties, and in particular,
there is a path of such metrics, so by the proof of~\cite[Theorem 6.11]{Bel-cb} we can pick 
$\mathcal D\setminus\B_p$ to be a subset of a Cantor set inside this path.
\end{proof}

\small
\bibliographystyle{amsalpha}
\bibliography{ghs2}

\end{document}